\documentclass[12pt,a4paper,reqno]{amsart}
\usepackage[hmargin=2.5cm,bmargin=2.5cm,tmargin=3cm]{geometry}
\usepackage{enumerate}
\usepackage{orcidlink}
\usepackage{amsmath,amsthm,amssymb,amsfonts,latexsym}
\usepackage[gen]{eurosym}
\usepackage[german,english]{babel}

\setcounter{secnumdepth}{5}

\usepackage{latexsym}

\usepackage{tikz}					
\usetikzlibrary{shapes,arrows,decorations.pathmorphing,backgrounds,fit,positioning,shapes.symbols,chains,graphs,graphs.standard}

\newcommand{\cD}{\mathcal{D}}

%


\newtheorem{theorem}{Theorem}

\newtheorem{proposition}[theorem]{Proposition}

\newtheorem{remark}[theorem]{Remark}
\newtheorem{corollary}[theorem]{Corollary}

\newtheorem{example}[theorem]{Example}
\newcommand{\be}{\begin{equation}}
\newcommand{\ee}{\end{equation}}
\newcommand{\bea}{\begin{eqnarray*}}
	\newcommand{\eea}{\end{eqnarray*}}
\newcommand{\beq}{\begin{eqnarray}}
\newcommand{\eeq}{\end{eqnarray}}

\newcommand{\Graph}{\mathcal G}
\newcommand{\mV}{\mathsf V}

\newcommand{\mv}{\mathsf v}
\newcommand{\mE}{\mathsf E}
\newcommand{\me}{\mathsf e}



\usepackage{todonotes}


\title[A modified local Weyl law and spectral comparison]{A modified local Weyl law and spectral comparison results for $\delta'$-coupling conditions} 

\subjclass[2010]{}

\keywords{}

\author[P.~Bifulco]{Patrizio Bifulco\orcidlink{0009-0004-0628-374X}}
\author[J.~Kerner]{Joachim Kerner\orcidlink{0000-0003-0638-4183}}

\address{Lehrgebiet Analysis, Fakult\"at Mathematik und Informatik, Fern\-Universit\"at in Hagen, D-58084 Hagen, Germany}
\email{patrizio.bifulco@fernuni-hagen.de}

\address{Lehrgebiet Analysis, Fakult\"at Mathematik und Informatik, Fern\-Universit\"at in Hagen, D-58084 Hagen, Germany}
\email{joachim.kerner@fernuni-hagen.de}

\date{\today}

\thanks{
}

\begin{document}
	
	\begin{abstract} We study Schrödinger operators on compact finite metric graphs subject to $\delta'$-coupling conditions. Based on a novel modified local Weyl law, we derive an explicit expression for the limiting mean eigenvalue distance of two different self-adjoint realisations on a given graph. Furthermore, using this spectral comparison result, we also study the limiting mean eigenvalue distance comparing $\delta'$-coupling conditions to so-called anti-Kirchhoff conditions, showing divergence and thereby confirming a numerical observation in \cite{BandSchanzSofer}. 
	\end{abstract}
	
\maketitle

	\section{Introduction}
	
To study spectral properties of Schrödinger-type operators on domains in Euclidean space, manifolds or metric graphs is an old and fascinating endeavour in mathematics and mathematical physics. In many cases, the spectrum of the considered operator is purely discrete and one then aims at understanding the dependence of the eigenvalues on the potential and the geometrical features of the object the operator is acting on. Along this line of reasoning, metric graphs have become a popular model in spectral theory in the last decades since they interpolate between one- and higher-dimensional aspects; encountered, for example, in the study of manifolds. In other words, although metric graphs are locally one-dimensional objects, their global behaviour is more involved due to their complex topology. At this point, one should also mention that quantum graphs have also become important models in other areas such as -- for example -- quantum chaos~\cite{KS} and quantum chemistry~\cite{RS}. For a general introduction to quantum graphs we refer to~\cite{BerkolaikoBook,Mugnolob,KurasovBook} and references therein. 

The goal of this article is to establish spectral comparison results for Schrödinger-type operators on finite metric graphs and subject to so-called \emph{$\delta'$-coupling conditions}. Here, one should note that the prototypical coupling conditions on metric graphs are so-called \emph{$\delta$-coupling conditions} which generalize the well-known notion of a Dirac-$\delta$ potential on the real line; recall that, for such a singular potential, the function remains continuous but the derivative has a jump at the position of the potential. For $\delta'$-coupling conditions, this is reversed in the sense that now the derivative is continuous but the function itself is not! A special but important case of $\delta'$-coupling conditions are the so-called \emph{anti-Kirchhoff conditions} whose counterpart are \emph{Kirchhoff} (or \emph{standard}) coupling conditions; see \cite{RS23} for an overview.   

More specifically, our aim is to derive an explicit expression for the mean eigenvalue difference of two (different) Schrödinger operators on a given metric graph subject to $\delta^{\prime}$-coupling conditions (see Theorem~\ref{MainResult1}). We then use this expression in combination with other results to compare the spectrum of Schrödinger operators subject to $\delta^{\prime}$- and anti-Kirchhoff coupling conditions (see Theorem~\ref{MainResultII}); by doing this, we provide a rigorous explanation of a numerical observation made in \cite{BandSchanzSofer}. Note that, for Schrödinger operators with $\delta$-coupling conditions, an explicit expression for the mean eigenvalue difference was recently established in \cite{BK23} (see also \cite{BandSchanzSofer} for comparable results) using properties of the heat-kernel on graphs~\cite{BER}. For example, the heat-kernel asymptotics for small times can be used to derive a so-called \emph{local Weyl law} which is then used to establish the main spectral comparison result. On graphs and using heat-kernel techniques, a local Weyl law was established in \cite{BHJ} for standard coupling conditions and then generalized in \cite{BK23} to Schrödinger operators with general coupling conditions. Nevertheless, in order to treat $\delta^{\prime}$-coupling conditions, a modified local Weyl law has to be established (see Proposition~\ref{prop:localweyllaw}); this is due to the specific structure of the quadratic form associated to the considered Schrödinger operator. At this point, let us also refer to \cite{BKInfinite} where a modified local Weyl law has been derived for a certain infinite quantum graph. In the infinite setting, however, the modification was not due to a certain structure of the quadratic form but rather due to a non-standard Weyl law for the eigenvalue counting function. 

Let us also mention that similar spectral comparison results can be derived in different settings. For example, the paper \cite{BK23} had been motivated by \cite{RWY} (which itself was inspired by \cite{RR}) where the authors compare the spectrum of Laplacians on domains in $\mathbb{R}^2$ subject to Neumann and Robin boundary conditions, respectively. In the same manner, similar results can be established also for manifolds. 
 
This paper is organized as follows: In Section~\ref{Setting} we describe the setting and introduce the self-adjoint operators of interest. In Section~\ref{MainSpectral} we collect our main spectral comparison results whose proofs are then established in Section~\ref{ProofSection}. In Section~\ref{SomeAdditionalSpectral} we provide some additional spectral comparison results for bipartite graphs. Additionally, in Appendix~\ref{sec:general-potential-delta} we derive small-time  asymptotics for the heat-kernel for a larger class of potentials (compared to the one considered in~\cite{BER}) which then leads to a generalization of results obtained in \cite{BK23,BKAmba,BKInfinite}. 

\section{The setting}\label{Setting}
 Let $\Graph=\Graph(\mE,\mV)$ be a connected finite metric graph with (finite) edge set $\mE$ and (finite) vertex set $\mV$. To each edge $\me \in \mE$ we associate an interval $(0,\ell_\me)$ of finite length, i.e., we have $0 < \ell_\me < \infty$; $\ell_\me$ denotes the \emph{edge length}. The number of edges connected to a vertex $\mv \in \mV$ is called the \textit{degree} and shall be denoted by $\deg(\mv) \in \mathbb{N}$; more precisely,
  \begin{align*}
  \deg(\mv) := \#\{ \me \in \mE \: : \: \me \text{ is incident to } \mv\} \quad \text{for }\mv \in \mV \ .
\end{align*}   
  The underlying Hilbert space for our analysis is then given by
	\begin{equation}\label{HilbertSpace}
	L^2(\Graph)=\bigoplus_{\me \in \mE}L^2(0,\ell_\me)\ .
	\end{equation}
	We write $\mathcal{L}=\sum_{\me \in \mE} \ell_\me$ for the \emph{total length} of the graph. As in \cite{BK23}, we now introduce a Schrödinger operator on $L^2(\Graph)$ through
	\begin{equation}\label{SchrödingerOperator}
	-\Delta_\Graph+q\ ,
	\end{equation}
	which acts, on suitable functions $f=(f_\me)_{\me \in \mE} \in L^2(\Graph)$, via 
	\begin{equation}\label{ActionOperator}
	[\left(-\Delta_\Graph+q)f\right]_\me(x)=-f_\me^{\prime \prime}(x)+q_\me(x)f_\me(x)\ , \quad x \in (0,\ell_\me)\ .
	\end{equation}
	In this paper, we shall always assume that $q_\me \in (C^{\infty}\cap L^{\infty})(0,\ell_\me)$ for every $\me \in \mE$. This assumption guarantees that the operator $-\Delta_\Graph+q$ is in fact self-adjoint on a domain on which $-\Delta_\Graph$ is self-adjoint and, furthermore, it leads to the desired small-time asymptotics for the corresponding \emph{heat kernel} employing results of~\cite{BER}.
 \begin{remark}
     In \cite{BK23} the authors considered $\delta$-coupling conditions and, for the same reasons just mentioned, also assumed $q_\me \in (C^{\infty}\cap L^{\infty})(0,\ell_\me)$ for every $\me \in \mE$. However, as shown in the Appenidx~\ref{sec:general-potential-delta}, the results of \cite{BK23} can be extended to also hold for general potentials in $L^\infty(\Graph)$ using a bracketing argument for the corresponding $C_0$-semigroups. 
 \end{remark}
	In this paper we are interested in two particular self-adjoint domains of $-\Delta_\Graph$: the first one, for instance described in \cite[Proposition~2.1]{RS23}, is the so-called \emph{anti-Kirchhoff} realization given by
	\begin{equation*}
	\mathcal{D}_0:=\bigg\{f\in \widetilde{H}^2(\Graph): f'= (f_\me')_{\me \in \mE} \text{ is continuous at all vertices, and }  \sum_{\me \sim \mv} f_\me(\mv)=0  \bigg\}
	\end{equation*}
	where $\widetilde{H}^m(\Graph)$, $m \in \mathbb{N}$, denotes the subspace of $L^2(\Graph)$ consisting of all functions $f = (f_\me)_{\me \in \mE} \in L^2(\Graph)$ for which $f_\me \in H^m(0,\ell_\me)$ for all $\me \in \mE$, i.e.,
 \[
 \widetilde{H}^m(\Graph) := \bigoplus_{\me \in \mE} H^m(0,\ell_\me), \quad m \in \mathbb{N} \ .
 \]
 Note that we write $f_\me(\mv)$ for the boundary value of $f_\me \in H^1(0,\ell_\me)$ at either $x=0$ or $x=\ell_\me$, depending on the given vertex $\mv \in \mV$. Furthermore, the associated quadratic form is given by
 \begin{equation}
     h_0^q[f]=\int_{\mathcal{G}} \big(|f^{\prime}|^2 + q\vert f \vert^2 \big) \ \mathrm{d} x = \sum_{\me \in \mE} \int_0^{\ell_\me} \big( \vert f'_\me(x_\me) \vert^2 + q_\me(x_\me) \vert f_\me(x_\me) \vert^2 \big) \ \mathrm{d}x_\me
 \end{equation}
 with form domain 
 \begin{equation*}
     \cD[h_0^q]=\left\{f\in \widetilde{H}^1(\Graph):\ \mathcal{P}_{\mv,D}F(\mv)=0\ , \quad \forall \mv\in \mV \right\}\ ,
 \end{equation*}
 where $\mathcal{P}_{\mv,D} \in \mathbb{C}^{\deg(\mv) \times \deg(\mv)}$ is the (projection) matrix with constant entries $(\mathcal{P}_{\mv,D} )_{\me,\me'}=\frac{1}{\deg(\mv)}$ for all edges $\me,\me' \sim \mv$ and $F(\mv)=(f_{\me}(\mv))_{\me \sim \mv} \in \mathbb{C}^{\deg(\mv)}$ is the vector that collects all the boundary values of a function in a given vertex $\mv \in \mV$.

	The other self-adjoint realization of $-\Delta_\Graph$ -- referring to \emph{$\delta'$-coupling conditions} -- comes with the domain 
 \begin{equation*}\begin{split}
\mathcal{D}_{\beta}:=\bigg\{f\in \widetilde{H}^2(\Graph): f'= (f_\me')_{\me \in \mE} \text{ is continuous at all }& \text{vertices, and}   \sum_{\me \sim \mv} f_\me(\mv)=\beta_\mv f'(\mv)   \bigg\}  \ ,
    \end{split}
\end{equation*}
	where $\beta\equiv (\beta_\mv)_{\mv \in \mV} \in [0,\infty)^{|\mV|}$ collects the coupling strengths of the $\delta^{\prime}$-interactions in the vertices. 

 \begin{remark}
      Note that, since $f'$ is continuous at all vertices, i.e., $f_\me'(\mv) = f_{\me'}'(\mv)$ for any two edges $\me,\me' \in \mE$ connected to the same vertex $\mv \in \mV$,  the value $f'(\mv)$ is indeed well-defined.
 \end{remark}
 Moreover, whenever $\beta_v \neq 0$ for all $\mv \in \mV$, its associated quadratic form is given by
 \begin{equation}
    h_{\beta}^q[f]=\int_{\mathcal{G}}\big( |f^{\prime}|^2 + q\vert f \vert^2 \big)\ \mathrm{d}x+\sum_{\mv \in \mV} \frac{1}{\beta_{\mv}}\bigg|\sum_{\me \sim \mv} f_{\me}(\mv)
    \bigg|^2
 \end{equation}
 with form domain 
 \begin{equation}
     \cD[h_{\beta}^q]=\widetilde{H}^1(\Graph)=\left\{f\in \widetilde{H}^1(\Graph):\ \mathcal{P}_{\mv,\delta^{\prime}}F(\mv)=0\ , \quad \forall \mv\in \mV \right\}\ , 
 \end{equation}
 where $\mathcal{P}_{\mv,\delta^{\prime}}:=0 \in \mathbb{C}^{\deg(\mv) \times \deg(\mv)}$.
 \begin{remark}
     It is important to note that the anti-Kirchhoff $(-\Delta_\Graph, \mathcal{D}_0)$ realization is obtained from $(-\Delta_\Graph, \mathcal{D}_\beta)$ by choosing $\beta= (0,\dots,0)$. In this sense, the anti-Kirchhoff conditions are related to the $\delta^{\prime}$-coupling conditions in the same manner as the Kirchhoff conditions are to the $\delta$-coupling conditions. However, on a form level, the connection is less obvious since the form domains are not the same anymore. 
 \end{remark}
	Summing up, we are dealing with the self-adjoint (Schrödinger) operators $H^q_0=(-\Delta_\Graph+q,\mathcal{D}_{0})$ and $H^q_{\beta}=(-\Delta_\Graph+q,\mathcal{D}_{\beta})$. Any such operator has a purely discrete spectrum as the underlying metric graph is compact and finite. We shall denote the eigenvalues of $H^q_0$ as 
 $$\lambda^q_1(0) \leq \lambda^q_2(0) \leq \dots \rightarrow +\infty\ , $$
 and the eigenvalues of $H^q_{\beta}$ by
 $$\lambda^q_1(\beta) \leq \lambda^q_2(\beta) \leq \dots \rightarrow +\infty\ ,$$
 counting them with multiplicity in both cases. Also, $f^{0,q}_n$ and $f^{\beta,q}_n$, $n \in \mathbb{N}$, refer to corresponding orthogonal and normalized eigenfunctions.
		
	Finally, we may introduce the quantities we are going to study in the following, namely
\begin{equation}\begin{split}\label{DefinitionDifference}
d_n(\beta,\beta')&:=\lambda^q_n(\beta)-\lambda^q_n(\beta')\ , \quad n \in \mathbb{N}\ , \\
d^{q}_n(\beta,\beta')&:=\lambda^q_n(\beta)-\lambda^{q=0}_n(\beta')\ , \quad n \in \mathbb{N}\ .
\end{split}
\end{equation}
\section{Main spectral comparison results}\label{MainSpectral}
%
	%
 In this section we summarize our two main spectral comparison results whose proofs are then given in subsequent sections. The first result represents a counterpart to~\cite[Theorem~1]{BK23} and compares to different self-adjoint $\delta^{\prime}$-realizations with eath other. 
	\begin{theorem}[Comparison result $\delta^{\prime}$-conditions]\label{MainResult1} Let $\beta, \beta' \in (0,\infty)^{\vert \mV \vert}$ and $H^q_\beta, H^q_{\beta'}$ be two corresponding self-adjoint Schrödinger operators subject to $\delta'$-coupling conditions on a finite compact, connected quantum graph $\Graph$ as described above. Then one has
		\begin{equation}\label{RelationI}
		\lim_{N \rightarrow \infty}\frac{1}{N}\sum_{n=1}^{N} d_n(\beta,\beta')=\frac{2}{\mathcal{L}}\cdot \sum_{\mv \in \mV} \deg(\mv) \bigg( \frac{1}{\beta_\mv} - \frac{1}{\beta_\mv'} \bigg) \ .
		\end{equation}
  Furthermore, comparing the eigenvalues of $H^q_\beta, H^{q=0}_{\beta'}$ one obtains
  \begin{equation}\label{RelationPotential}
		\lim_{N \rightarrow \infty}\frac{1}{N}\sum_{n=1}^{N} d^q_n(\beta,\beta')=\frac{2}{\mathcal{L}}\cdot \sum_{\mv \in \mV} \deg(\mv) \bigg( \frac{1}{\beta_\mv} - \frac{1}{\beta_\mv'} \bigg)+\frac{1}{\mathcal{L}}\int_\Graph q\ \mathrm{d}x \ .
		\end{equation}
	\end{theorem}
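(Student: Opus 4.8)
The plan is to reduce both identities to the modified local Weyl law of Proposition~\ref{prop:localweyllaw} by exploiting that the quadratic form $h^q_\beta$ depends \emph{affinely} on the reciprocal coupling strengths. Writing $s_\mv := 1/\beta_\mv \in (0,\infty)$ and $S_\mv(f) := \sum_{\me \sim \mv} f_\me(\mv)$, the form reads
\begin{equation*}
h^q_\beta[f] = \int_\Graph \big(|f'|^2 + q|f|^2\big)\,\mathrm{d}x + \sum_{\mv\in\mV} s_\mv\,|S_\mv(f)|^2 ,
\end{equation*}
and since the two forms share the common domain $\widetilde{H}^1(\Graph)$, this is an affine family in $s = (s_\mv)_{\mv\in\mV}$. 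I would encode the left-hand sides through the Ky Fan sums
\begin{equation*}
\Phi_N(s) := \min\Big\{ \textstyle\sum_{n=1}^{N} h^q_\beta[e_n] : \{e_n\}_{n=1}^{N}\subset \widetilde{H}^1(\Graph)\ \text{orthonormal in}\ L^2(\Graph) \Big\} = \sum_{n=1}^{N}\lambda^q_n(\beta),
\end{equation*}
where $\beta$ is viewed as a function of $s$. As a pointwise minimum of functions that are affine in $s$, each $\Phi_N$ is \emph{concave}; moreover, for any choice of an orthonormal eigenbasis $f^{\beta,q}_1,\dots,f^{\beta,q}_N$ the vector with entries $\sum_{n=1}^{N}|S_\mv(f^{\beta,q}_n)|^2$ is a supergradient of $\Phi_N$ at $s$ (the Hellmann--Feynman derivative $\partial_{s_\mv}\Phi_N$ at points of differentiability).

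The decisive step is a \emph{concavity sandwich}. Applying the supergradient inequality for the concave $\Phi_N$ at the two endpoints $s$ (corresponding to $\beta$) and $s'$ (corresponding to $\beta'$) gives
\begin{equation*}
\begin{split}
\sum_{\mv\in\mV}(s_\mv - s'_\mv)\,\frac{1}{N}\sum_{n=1}^{N}\big|S_\mv(f^{\beta,q}_n)\big|^2
&\ \le\ \frac{1}{N}\sum_{n=1}^{N} d_n(\beta,\beta') \\
&\ \le\ \sum_{\mv\in\mV}(s_\mv - s'_\mv)\,\frac{1}{N}\sum_{n=1}^{N}\big|S_\mv(f^{\beta',q}_n)\big|^2 .
\end{split}
\end{equation*}
By the modified local Weyl law (Proposition~\ref{prop:localweyllaw}), which yields $\tfrac1N\sum_{n=1}^{N}|S_\mv(f^{\beta,q}_n)|^2 \to \tfrac{2\deg(\mv)}{\mathcal{L}}$ with a constant \emph{independent} of the coupling vector in $(0,\infty)^{\vert\mV\vert}$, both the lower and the upper bound converge, as $N\to\infty$, to the \emph{same} limit $\sum_{\mv}(s_\mv - s'_\mv)\,\tfrac{2\deg(\mv)}{\mathcal{L}}$. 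A squeeze then gives \eqref{RelationI} after resubstituting $s_\mv = 1/\beta_\mv$. The point of this route is that it invokes the local Weyl law only at the two endpoints and never along an interpolating path, so no parameter-uniform control is required.

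For \eqref{RelationPotential} I would run the identical scheme with one additional affine parameter $\tau\in[0,1]$ multiplying the potential term, comparing the endpoints $(s',\tau=0)$, corresponding to $H^{q=0}_{\beta'}$, and $(s,\tau=1)$, corresponding to $H^{q}_{\beta}$. The sums $\Phi_N(s,\tau)$ are jointly concave, the $s_\mv$-component of the supergradient is as before, and the $\tau$-component is $\sum_{n=1}^{N}\int_\Graph q\,|f_n|^2\,\mathrm{d}x$. The concavity sandwich together with the classical (bulk) local Weyl law $\tfrac1N\sum_{n=1}^{N}\int_\Graph q\,|f_n|^2\,\mathrm{d}x \to \tfrac1{\mathcal{L}}\int_\Graph q\,\mathrm{d}x$, valid at both endpoints since the potential enters at lower order, contributes exactly the extra term $\tfrac1{\mathcal{L}}\int_\Graph q\,\mathrm{d}x$, while the boundary part reproduces the coupling contribution as in \eqref{RelationI}.

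The main obstacle I expect lies entirely in the input Proposition~\ref{prop:localweyllaw}: pinning down the \emph{correct constant} $2\deg(\mv)/\mathcal{L}$ for the averaged boundary combination $|S_\mv(f_n)|^2$ --- in particular that the off-diagonal cross terms $f_\me(\mv)\overline{f_{\me'}(\mv)}$ average to zero while each diagonal term contributes $2/\mathcal{L}$, the factor $2$ reflecting the boundary doubling of the heat kernel, and that this constant does not depend on the coupling. Within the proof of the theorem the only delicate points are eigenvalue multiplicities straddling the index $N$, which are harmless because the superdifferential (equivalently, the Ky Fan minimum) is insensitive to the choice of orthonormal basis inside an eigenspace, and the fact that the local Weyl limit is likewise independent of that basis choice.
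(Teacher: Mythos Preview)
Your argument is correct and takes a genuinely different route from the paper. The paper writes each eigenvalue difference via a Hellmann--Feynman (Hadamard) formula along the straight-line path $\tau\mapsto\beta'+\tau(\beta-\beta')$, obtaining an integral representation of $d_n(\beta,\beta')$ in terms of $|S_\mv(f_n^{\beta(\tau),q})|^2$. It then invokes a uniform $L^\infty$-bound on the eigenfunctions, uniform in both $n$ and the path parameter $\tau$, to justify dominated convergence and to apply the local Weyl law at \emph{every} intermediate $\tau$. Your concavity sandwich, by contrast, uses that the Ky Fan sum $\Phi_N(s)=\sum_{n\le N}\lambda_n^q(\beta)$ is concave in $s=(1/\beta_\mv)_\mv$ and extracts two-sided bounds involving only the endpoint supergradients, so Proposition~\ref{prop:localweyllaw} is needed only at the two fixed parameter choices $\beta$ and $\beta'$. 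This bypasses the path-uniform eigenfunction control and the dominated-convergence step entirely, which is a real simplification. The paper's approach, on the other hand, yields an exact integral formula for each individual $d_n$ before averaging, and would adapt more readily to situations where the form does not depend affinely on a natural parameter.

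One point worth tightening: for \eqref{RelationPotential} you appeal to the integrated bulk law $\tfrac1N\sum_n\int_\Graph q\,|f_n|^2\,\mathrm{d}x\to\tfrac1{\mathcal L}\int_\Graph q\,\mathrm{d}x$. Proposition~\ref{prop:localweyllaw} gives the pointwise statement, but passing to the integrated version still requires some control---e.g.\ the (fixed-parameter) uniform $L^\infty$-bound on eigenfunctions together with dominated convergence in $x$. This is strictly weaker than what the paper needs (no uniformity in $\tau$), but it should be stated rather than subsumed under ``the potential enters at lower order.''
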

\begin{remark}
  For constant parameters $\beta, \beta'$, formula~\eqref{RelationI} can be written as 
\begin{align}
\lim_{N \rightarrow \infty} \frac{1}{N} \sum_{n=1}^N d_n(\beta,\beta') = \frac{4\vert \mE \vert}{\mathcal{L}} \bigg( \frac{1}{\beta} - \frac{1}{\beta'} \bigg)\ ,
\end{align}
using the \emph{Handshake lemma} which states that $\sum_{\mv \in \mV} \deg(\mv) = 2 \vert \mE \vert$; of course, formula~\eqref{RelationPotential} can be rewritten accordingly.
\end{remark}
\begin{remark} Looking at a fixed graph $\Graph$ and setting $\beta^{\prime}=(2\gamma,\dots, 2\gamma)$ and $\beta=(\gamma,\dots, \gamma)$ for some $\gamma > 0$, one concludes that 
\begin{equation*}
    \lim_{N \rightarrow \infty}\frac{1}{N}\sum_{n=1}^{N} d_n(\beta,\beta') \sim \sum_{\mv \in \mV}\deg(\mv)
\end{equation*}
which means that the mean spectral shift is proportional to the volume of the associated \emph{discrete} graph. Interestingly, in the case of $\delta$-coupling conditions, the mean spectral shift is proprtional to the \emph{discrete surface measure} instead; see \cite{BK23,BKDiscrete}. 
\end{remark}

The second result aims at comparing $\delta^{\prime}$-conditions with anti-Kirchhoff conditions. Informally, since anti-Kirchhoff conditions are obtained by setting $\beta_{\mv}=0$ in a given vertex $\mv \in \mV$, a corresponding limit in Theorem~\ref{MainResult1} suggests that the mean eigenvalue differences should diverge in this situation. This is indeed supported by the following statement which also provides a rigorous explanation to a numerical observation made in \cite[Section~7.4]{BandSchanzSofer}.
\begin{theorem}[Comparison to anti-Kirchhoff conditions]\label{MainResultII}  Let $\Graph$ be a finite compact, connected quantum graph as described above. Consider the case where $\beta_\mv \equiv \beta \in [0,\infty)$ and $\beta_\mv' \equiv \beta' \in [0,\infty)$ for all $\mv \in \mV$ with $\beta, \beta' \in [0,\infty)$ be given such that $\beta + \beta' > 0$ and $\beta = 0$ or $\beta' = 0$. Moreover, let $H^q_\beta, H^q_{\beta'}$ be two corresponding self-adjoint Schrödinger operators either subject to anti-Kirchhoff or $\delta'$-coupling conditions. Then,
    \begin{equation}\label{RelationInfinite}
		\lim_{N \rightarrow \infty}\frac{1}{N}\sum_{n=1}^{N} d_n(\beta,\beta')=\lim_{N \rightarrow \infty}\frac{1}{N}\sum_{n=1}^{N} d_n^q(\beta,\beta') = \begin{cases}
		    +\infty \ , & \text{if $\beta = 0$}\ , \\ -\infty\ , & \text{if $\beta' = 0$\ .}
		\end{cases}
		\end{equation}
\end{theorem}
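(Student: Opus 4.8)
The plan is to derive Theorem~\ref{MainResultII} from Theorem~\ref{MainResult1} by a monotonicity-and-squeeze argument: although the anti-Kirchhoff realisation is excluded from the hypotheses of Theorem~\ref{MainResult1} (which requires strictly positive couplings), it can be sandwiched by genuine $\delta'$-realisations with small positive coupling, whose mean eigenvalue differences are controlled by Theorem~\ref{MainResult1} and blow up as the coupling tends to~$0$. The structural input that makes this work is a comparison of eigenvalues in the coupling parameter, obtained from the min-max principle.

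The main step, which is also the principal obstacle, is the eigenvalue comparison between a $\delta'$-realisation and the anti-Kirchhoff realisation, namely $\lambda^q_n(\tilde\beta)\le\lambda^q_n(0)$ for every constant $\tilde\beta>0$ and every $n\in\mathbb N$. This is delicate precisely because, as noted in the remark following Theorem~\ref{MainResult1}, the form domains differ: one cannot simply invoke monotonicity of the penalty term on a common domain. I would resolve this by the observation that $\mathcal D[h_0^q]\subseteq\widetilde H^1(\Graph)=\mathcal D[h_{\tilde\beta}^q]$ and that the $\delta'$-penalty $\sum_{\mv}\tfrac1{\tilde\beta}\bigl|\sum_{\me\sim\mv}f_\me(\mv)\bigr|^2$ vanishes identically on $\mathcal D[h_0^q]$, since $\sum_{\me\sim\mv}f_\me(\mv)=0$ there; hence $h^q_{\tilde\beta}[f]=h^q_0[f]$ for all $f\in\mathcal D[h_0^q]$. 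Applying the min-max principle, the infimum defining $\lambda^q_n(\tilde\beta)$ ranges over all $n$-dimensional subspaces of $\widetilde H^1(\Graph)$, whereas that defining $\lambda^q_n(0)$ ranges only over the smaller collection inside $\mathcal D[h_0^q]$, on which the two Rayleigh quotients agree; minimising over the larger collection yields the smaller value, giving $\lambda^q_n(\tilde\beta)\le\lambda^q_n(0)$.

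With this comparison in hand, the remainder is a squeeze. Consider first $\beta'=0$ with $\beta>0$ fixed. For any auxiliary $\tilde\beta'>0$ the inequality $\lambda^q_n(\tilde\beta')\le\lambda^q_n(0)$ yields $d_n(\beta,0)\le d_n(\beta,\tilde\beta')$ for every $n$, so that
\[
\limsup_{N\to\infty}\frac1N\sum_{n=1}^N d_n(\beta,0)\;\le\;\lim_{N\to\infty}\frac1N\sum_{n=1}^N d_n(\beta,\tilde\beta')\;=\;\frac{4|\mE|}{\mathcal L}\Bigl(\frac1\beta-\frac1{\tilde\beta'}\Bigr),
\]
where the equality is the constant-coupling form of Theorem~\ref{MainResult1}. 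Letting $\tilde\beta'\to0^+$ sends the right-hand side to $-\infty$, forcing the limit to equal $-\infty$. The case $\beta=0$, $\beta'>0$ is symmetric: the reversed inequality $d_n(0,\beta')\ge d_n(\tilde\beta,\beta')$ gives $\liminf_{N}\frac1N\sum_{n=1}^N d_n(0,\beta')\ge\frac{4|\mE|}{\mathcal L}\bigl(\frac1{\tilde\beta}-\frac1{\beta'}\bigr)$, which diverges to $+\infty$ as $\tilde\beta\to0^+$.

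Finally, for the quantities $d_n^q$ I would argue in exactly the same way, invoking the anti-Kirchhoff comparison at the appropriate potential (at potential $q$ when $\beta=0$, and at potential $q=0$ for the operator $H^{q=0}_{\beta'}$ when $\beta'=0$) and using \eqref{RelationPotential} in place of \eqref{RelationI}. The only change is the additional term $\frac1{\mathcal L}\int_\Graph q\,\mathrm d x$ appearing in the finite limit; being a fixed finite constant, it does not affect the divergence, and the same squeeze delivers $\pm\infty$ according to whether $\beta=0$ or $\beta'=0$. Thus the entire argument reduces to the min-max eigenvalue comparison of the second paragraph, everything else being a routine passage to the limit built on Theorem~\ref{MainResult1}.
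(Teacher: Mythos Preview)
Your proposal is correct and follows essentially the same approach as the paper: the paper likewise derives the key inequality $\lambda^q_n(\gamma)\le\lambda^q_n(0)$ from the min-max principle via the observation that $h^q_\gamma[f]=h^q_0[f]$ for $f\in\mathcal D[h^q_0]\subset\mathcal D[h^q_\gamma]$, and then squeezes against Theorem~\ref{MainResult1} by letting the auxiliary parameter $\gamma\to0^+$. The treatment of $d_n^q$ via \eqref{RelationPotential} with the harmless additive constant $\frac{1}{\mathcal L}\int_\Graph q\,\mathrm dx$ is also identical.
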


	\section{A modified local Weyl law for $\delta'$-coupling conditions}
	This section provides a crucial ingredient -- a so-called local Weyl law -- in order to show Theorem~\ref{MainResult1}. One should stress that local Weyl laws are interesting in their own right. For example, as shown in~\cite{BKInfinite} considering infinite metric graphs, modified local Weyl laws lead to different spectral comparison results. Also, compared to the local Weyl law given in \cite[Proposition~4]{BK23} for $\delta$-coupling conditions (see also \cite[Theorem~4.1]{BHJ} for a special case), we now establish a \emph{modified} local Weyl law also for $\delta^{\prime}$-coupling conditions. 
	\begin{proposition}[Local Weyl law for $\delta'$-coupling conditions]\label{prop:localweyllaw}
		Let $\Graph$ be a graph with properties described above and let $v \in \mV$ be some vertex. Then
		\[
		\lim_{N \rightarrow \infty} \frac{1}{N} \sum_{n=1}^N \bigg\vert \sum_{\me \sim \mv} (f^{\beta,q}_n)_\me(\mv) \bigg\vert^2 = \frac{2\deg(\mv)}{\mathcal{L}}\ ,
		\]
		holds for any $\beta \in \mathbb{R}^{\vert \mV \vert}$ where $\beta_\mv \neq 0$. Furthermore, for any interior point $x \in (0,\ell_\me)$ one has
\begin{equation*}
     \lim_{N \rightarrow \infty}\frac{1}{N} \sum_{n=1}^N\big\vert f^{\beta, q}_n(x) \big\vert^2=\frac{1}{\mathcal{L}}\ .
\end{equation*}
	\end{proposition}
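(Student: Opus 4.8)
The plan is to obtain the interior local Weyl law from the small-time asymptotics of the heat kernel combined with a Tauberian theorem, following the classical scheme of \cite{BHJ,BK23}. The decisive point -- and the reason this part requires \emph{no} modification despite the $\delta'$-coupling conditions -- is that at an interior point the leading heat-kernel asymptotics is a purely local quantity, insensitive both to the vertex conditions and to the potential.

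Concretely, I would first fix $x\in(0,\ell_\me)$ and introduce the positive sampling measure
$$\mu_x:=\sum_{n}\bigl\vert f^{\beta,q}_n(x)\bigr\vert^2\,\delta_{\lambda^q_n(\beta)}$$
on $\mathbb{R}$. Since $q\in L^\infty(\Graph)$ the operator $H^q_\beta$ is bounded below with only finitely many eigenvalues beneath any level, so (after a harmless spectral shift $H^q_\beta\mapsto H^q_\beta+c$, which leaves the eigenfunctions and the Cesàro limit untouched) we may regard $\mu_x$ as a measure on $[0,\infty)$. Its Laplace transform is exactly the diagonal of the heat kernel $p^{\beta,q}_t$ of $H^q_\beta$, i.e.
$$\int_{\mathbb{R}}\e^{-t\lambda}\,\mathrm{d}\mu_x(\lambda)=\sum_{n}\e^{-t\lambda^q_n(\beta)}\bigl\vert f^{\beta,q}_n(x)\bigr\vert^2=p^{\beta,q}_t(x,x)\ .$$

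The key input is then the small-time behaviour $p^{\beta,q}_t(x,x)\sim \tfrac{1}{\sqrt{4\pi t}}$ as $t\to 0^+$. This is where locality enters: because $x$ is bounded away from every vertex, the deviation of $p^{\beta,q}_t$ from the free one-dimensional kernel on the edge comes from paths reflected at the vertices and is therefore exponentially small, of order $O(\e^{-c/t})$, while the bounded potential only perturbs the subleading coefficients. Thus the leading term coincides with $1/\sqrt{4\pi t}$ regardless of $\beta$, as furnished by the heat-kernel results of \cite{BER} in the form extended to general $q\in L^\infty(\Graph)$ in Appendix~\ref{sec:general-potential-delta}. Applying Karamata's Tauberian theorem to $\mu_x$, whose Laplace transform is asymptotic to $\tfrac{1}{2\sqrt{\pi}}\,t^{-1/2}$, yields
$$\mu_x\bigl([0,\lambda]\bigr)=\sum_{\lambda^q_n(\beta)\le\lambda}\bigl\vert f^{\beta,q}_n(x)\bigr\vert^2\sim\frac{1}{2\sqrt{\pi}\,\Gamma(3/2)}\,\sqrt{\lambda}=\frac{\sqrt{\lambda}}{\pi}\qquad(\lambda\to\infty)\ .$$

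Finally I would combine this with the standard Weyl asymptotics $\sqrt{\lambda^q_N(\beta)}\sim \pi N/\mathcal{L}$ for the eigenvalue counting function on a graph of total length $\mathcal{L}$. Evaluating the counting-measure asymptotics at $\lambda=\lambda^q_N(\beta)$ gives $\sum_{n=1}^N\vert f^{\beta,q}_n(x)\vert^2\sim\tfrac{1}{\pi}\sqrt{\lambda^q_N(\beta)}\sim N/\mathcal{L}$, and dividing by $N$ produces the claimed limit $1/\mathcal{L}$. I expect the only genuine difficulty to lie in the heat-kernel step -- namely making rigorous that neither the $\delta'$-vertex data nor the potential alters the $t^{-1/2}$ coefficient at an interior point, together with a uniform control of the error. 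Once that locality statement is secured through \cite{BER} and the Appendix, both the Tauberian passage and the conversion from the counting-measure asymptotics to the Cesàro average are entirely routine.
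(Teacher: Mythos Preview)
Your proposal treats only the second assertion (the interior point) and is silent on the first one, which is the substantive content of the proposition. The interior statement is indeed routine and your outline for it is essentially the argument the paper itself invokes (there via a ``dummy vertex'' and a reference to \cite[Remark~5]{BK23}): one uses the diagonal heat-kernel asymptotics $p_t(x,x)\sim (4\pi t)^{-1/2}$ at an interior point, Karamata, and the standard Weyl law. So for that part you and the paper agree, and nothing is missing.

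The real point of Proposition~\ref{prop:localweyllaw}, however, is the \emph{vertex} identity
\[
\lim_{N\to\infty}\frac{1}{N}\sum_{n=1}^{N}\Bigl\vert\sum_{\me\sim\mv}(f^{\beta,q}_n)_\me(\mv)\Bigr\vert^{2}=\frac{2\deg(\mv)}{\mathcal{L}}\ ,
\]
and your proposal contains no argument for it. This is the ``modified'' local Weyl law: one does not study $|f_n(\mv)|^2$ (which is not even well-defined for $\delta'$-conditions) but the squared \emph{sum} of edge boundary values. The paper's proof introduces the measure $\mu=\sum_n\bigl\vert\sum_{\me\sim\mv}(f^{\beta,q}_n)_\me(\mv)\bigr\vert^2\delta_{\lambda^q_n(\beta)}$, expands its Laplace transform via Mercer as $\sum_{\me,\me'\sim\mv}p^{H^q_\beta}_{\me,\me'}(t;\mv,\mv)$, and then uses the small-time asymptotics of \cite{BER} together with the specific scattering matrix $S_\mv(\infty)=\mathrm{Id}-2\mathcal{P}_{\mv,\delta'}=\mathrm{Id}$ for $\delta'$-conditions; this gives $2/\sqrt{4\pi t}$ on the diagonal blocks and $0$ off-diagonal, whence the Laplace transform $\sim 2\deg(\mv)/\sqrt{4\pi t}$. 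That computation is precisely what distinguishes the $\delta'$ case from the $\delta$ case treated in \cite{BK23}, and it is absent from your write-up.

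One smaller remark: your appeal to Appendix~\ref{sec:general-potential-delta} to relax the potential regularity is misplaced here, since that appendix explicitly covers only $\delta$-coupling conditions (its positivity-based bracketing does not apply to $\delta'$). Under the paper's standing assumption $q_\me\in C^\infty\cap L^\infty$ you can and should cite \cite{BER} directly.
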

\begin{proof}
	According to \cite[Proposition~8.1]{BER}, the generated semigroup $(\mathrm{e}^{-t H_\beta^q})_{t \geq 0}$ consists of integral operators $\mathrm{e}^{-tH_\beta^q}$ on $L^2(\Graph)$ induced by an integral kernel $p^{H_\beta^q}(t;\cdot, \cdot) \in L^\infty(\Graph \times \Graph)$ which is (jointly) smooth edgewise. 
 
 We now prove the first part of the theorem: Due to \cite[Proposition~8.1]{BER}, it follows that, $x \simeq (\mv,\me)$ and $y \simeq (\mv,\me')$,
	\begin{align}\label{eq:bolte-egger-rueckriemen-short-time-heat-kernel}
	p^{H_\beta^q}_{\me,\me'}(t;x,y) \sim \frac{\delta_{\me,\me'} + S_\mv(\infty)_{\me,\me'}}{\sqrt{4\pi t}} \ , \quad \text{as } t \rightarrow 0^+, 
	\end{align}
	where $S_\mv(\infty)= \mathrm{Id}-2\mathcal{P}_{\mv,\beta}$. Thus, \eqref{eq:bolte-egger-rueckriemen-short-time-heat-kernel} readily implies that
	\begin{align*}
	p^{H_\beta^q}_{\me,\me'}(t;x,y) \sim \begin{cases} \frac{2}{\sqrt{4\pi t}}\ , & \text{if } \me = \me'\ , \\ 0\ , & \text{if } \me \neq \me'\ , \end{cases} \quad \text{as }t \rightarrow 0^+\ .
	\end{align*}
	Then, defining the positive measure 
\[
\mu := \sum_{n=1}^\infty \bigg\vert \sum_{\me \sim \mv} (f^{\beta,q}_n)_\me(\mv) \bigg\vert^2 \delta_{\lambda_n^q(\beta)} \ ,
\]	
	where $\delta_{\lambda_n^q(\beta)}$ denotes the Dirac measure, Mercer's theorem yields
	\begin{align}
	\begin{aligned}
	\int_0^\infty \mathrm{e}^{-tx} \mathrm{d}\mu(x) &= \sum_{n=1}^\infty \mathrm{e}^{-\lambda_n^q(\beta)t} \bigg\vert \sum_{\me \sim \mv} (f^{\beta,q}_n)_\me(\mv) \bigg\vert^2 \\&= \sum_{\me,\me' \sim \mv} \sum_{n=1}^\infty \mathrm{e}^{-\lambda_n^q(\beta)t} (f_n^{\beta,q})_\me(\mv) (f_n^{\beta,q})_{\me'}(\mv) \\&= \sum_{\me,\me' \sim \mv} p^{H_\beta^q}_{\me,\me'}(t;x,y) = \sum_{\stackrel{\me,\me' \sim \mv}{\me \neq \me'}} p^{H_\beta^q}_{\me,\me'}(t;x,y) + \sum_{\me \sim \mv} p^{H_\beta^q}_{\me,\me}(t;x,y) \\&\sim 0 + \sum_{\me \sim \mv} \frac{2}{\sqrt{4\pi t}} = \frac{2\deg(\mv)}{\sqrt{4\pi t}}, \quad \text{as } t \rightarrow 0^+ \ .
	\end{aligned}
	\end{align}
	According to the Karamata's theorem (see, for instance, \cite[Theorem~6.33]{BorSpectralTheory}), this implies
	\[
	\mu[0,\lambda] = \sum_{\lambda_n^{q}(\beta) \leq \lambda} \bigg\vert \sum_{\me \sim \mv} (f^{\beta,q}_n)_\me(\mv) \bigg\vert^2 \sim \frac{2\deg(\mv)}{\pi} \sqrt{\lambda} \ , \quad \text{as } \lambda \rightarrow \infty\ .
	\]
	Employing the classical Weyl law $\lambda_N^q(\beta) \sim (\pi N / \mathcal{L})^2$ to replace the summation condition $\lambda_n^q(\beta) \leq \lambda$ by $1 \leq n \leq N$ then yields
	\begin{align*}
	 \sum_{n=1}^N \bigg\vert \sum_{\me \sim \mv} (f^{\beta,q}_n)_\me(\mv) \bigg\vert^2 \sim \frac{2\deg(\mv)}{\pi} \frac{\pi N}{\mathcal{L}} \:\: \Longleftrightarrow \:\: \frac{1}{N}\sum_{n=1}^N \bigg\vert \sum_{\me \sim \mv} (f^{\beta,q}_n)_\me(\mv) \bigg\vert^2 \sim \frac{2\deg(\mv)}{\mathcal{L}} \ ,
	\end{align*}
	as $N \rightarrow \infty$.

The second part of the theorem follows from an application of \cite[Remark~5]{BK23}, imagining the presence of a so-called ``dummy vertex'' at $x \in (0,\ell_\me)$ at which one imposes natural boundary conditions, i.e., one assumes continuity of the function as well as the derivatives. 
\end{proof}

\section{Proofs of the main spectral comparison results}\label{ProofSection}
In this section we establish the proofs of the main spectral comparison results, Theorem~\ref{MainResult1} and Theorem~\ref{MainResultII}.
\subsection{Proof of Theorem~\ref{MainResult1}} The proof follows the methods developed in \cite[Section~3]{BK23}, using Proposition \ref{prop:localweyllaw} and a version of \cite[Corollary~6.5]{RS23}. 

    In a first step we write, using a Hellmann-Feynman or Hadamard-type formula~\cite{LSHadamard} in combination with the chain rule, we obtain
    \begin{equation}
        d_n(\beta,\beta') = \int_{0}^{1}\sum_{\mv \in \mV}(\beta_\mv'-\beta_\mv)\left(\frac{1}{(\beta'_\mv+\tau(\beta_\mv-\beta'_\mv)}\right)^2\bigg\vert \sum_{\me \sim \mv} 
        (f^{\beta_\mv'+\tau(\beta_\mv - \beta_\mv'),q}_n)_\me(\mv) \bigg\vert^2\ \mathrm{d}\tau\ ,
    \end{equation}
    and
    \begin{equation}
    \begin{split}
        d_n^q(\beta,\beta') &= \sum_{\me \in \mE}\int_{0}^{\ell_\me}   q_\me(x) \big\vert f^{\beta_\mv'+\tau(\beta_\mv - \beta_\mv'),\tau q}_n(x) \big\vert^2 \mathrm{d}x \\
        &\quad + \int_{0}^{1} \sum_{\mv \in \mV}(\beta^{\prime}_{\mv}-\beta_{\mv})\left(\frac{1}{(\beta^{\prime}_\mv+\tau(\beta_\mv-\beta'_{\mv})}\right)^2\bigg\vert \sum_{\me \sim \mv} (f^{\beta_\mv'+\tau(\beta_\mv - \beta_\mv'),\tau q}_n)_{\me}(\mv) \bigg\vert^2\ \mathrm{d}\tau  \ .
        \end{split}
    \end{equation}
Then, using a version of \cite[Lemma~1.3]{BK23}, one can observe that there exists some constant $C > 0$ such that
\[
\Vert f_n^{\beta_\mv' + \tau(\beta_\mv-\beta_\mv'),q} \Vert_{L^\infty(\Graph)} \leq C
\]
as well as 
\[
\Vert f_n^{\beta_\mv' + \tau(\beta_\mv-\beta_\mv'),\tau q} \Vert_{L^\infty(\Graph)} \leq C
\]
holds for all $n \in \mathbb{N}$ and all $\tau \in [0,1]$ (in other words, the eigenfunctions are \emph{uniformly} bounded in supremum norm). Thus, by dominated convergence and Proposition~\ref{prop:localweyllaw}, one obtains
\begin{equation*}\begin{split}
&\lim_{N \rightarrow \infty}\frac{1}{N}\sum_{n=1}^{N} d_n(\beta,\beta')
\\& \qquad\quad= \int_{0}^{1}\sum_{\mv \in \mV}\frac{\beta_\mv'-\beta_\mv}{(\beta'_\mv+\tau(\beta_\mv-\beta'_\mv))^2} \bigg(\lim_{N \rightarrow \infty}\frac{1}{N} \sum_{n=1}^N \bigg\vert \sum_{\me \sim \mv} (f^{\beta_\mv'+\tau(\beta_\mv - \beta_\mv'),q}_n)_\me(\mv)\bigg\vert^2 \bigg) \, \mathrm{d}\tau \\& \qquad\quad= \frac{2}{\mathcal{L}} \sum_{\mv \in \mV} \deg(\mv)\int_{0}^{1}\sum_{\mv \in \mV}\frac{\beta_\mv'-\beta_\mv}{(\beta'_\mv+\tau(\beta_\mv-\beta'_\mv))^2} \, \mathrm{d}\tau = \frac{2}{\mathcal{L}} \sum_{\mv \in \mV} \deg(\mv)\left(\frac{1}{\beta_\mv}-\frac{1}{\beta'_\mv}\right). 
\end{split}
\end{equation*}
Similarly,  
\begin{equation*}\begin{split}
\lim_{N \rightarrow \infty}\frac{1}{N}\sum_{n=1}^{N} d^q_n(\beta,\beta')
&=  \frac{2}{\mathcal{L}} \sum_{\mv \in \mV} \deg(\mv)\left(\frac{1}{\beta_\mv}-\frac{1}{\beta'_\mv}\right) \\
& \qquad \quad+  \sum_{\me \in \mE}\int_{0}^{\ell_\me}   q_\me(x) \left(\lim_{N \rightarrow \infty}\frac{1}{N} \sum_{n=1}^N\big\vert f^{\beta_\mv'+\tau(\beta_\mv - \beta_\mv'),\tau q}_n(x) \big\vert^2\right) \mathrm{d}x
\\& =\frac{2}{\mathcal{L}}\cdot \sum_{\mv \in \mV} \deg(\mv) \bigg( \frac{1}{\beta_\mv} - \frac{1}{\beta_\mv'} \bigg)+\frac{1}{\mathcal{L}}\int_\Graph q\ \mathrm{d}x\ .
\end{split}
\end{equation*}
\subsection{Proof of Theorem \ref{MainResultII}} This theorem follows from a direct application of Theorem~\ref{MainResult1} as well as the min-max principle. Recall that the $n$-th eigenvalue is characterized by
\begin{align}\label{eq:min-max}
\lambda_n^q(\beta) = \min_{\stackrel{\mathcal{F} \subset \mathcal{D}[h_\beta^q] \text{ subspace}}{\dim(\mathcal{F}) = n}} \max_{0 \neq f \in \mathcal{F}} \frac{h_\beta^q[f]}{\int_\Graph \vert f \vert^2 \ \mathrm{d}x}, \quad n \in \mathbb{N}\ .
\end{align}
    Now, since $h_\beta^q[f] = h^q_0[f]$ for any $f \in \mathcal{D}[h^q_0] \subset \mathcal{D}[h_\beta^q]$, \eqref{eq:min-max} implies
    \[
    \lambda_n^q(\beta) \leq \lambda_n^q(0)  \quad \text{for all $n \in \mathbb{N}$ and all $\beta \in [0,\infty)$}\ ;
    \]
    this should be compared with \cite[Corollary~5.3.(d)]{RS23}. According to Theorem \ref{MainResult1}, this implies, for fixed $\beta,\beta' \in [0,\infty)$, that
    \begin{align*}
         \frac{1}{N} \sum_{n=1}^N d_n(0,\beta') &= \frac{1}{N} \sum_{n=1}^N \big(\lambda_n(0) - \lambda_n(\beta')\big) \\&\geq \frac{1}{N} \sum_{n=1}^N \big(\lambda_n(\gamma) - \lambda_n(\beta')\big) \quad \text{for all $\gamma \in (0,\infty)$ and all $N \in \mathbb{N}$}\ ,
    \end{align*}
    as well as 
    \begin{align*}
         \frac{1}{N} \sum_{n=1}^N d_n(\beta,0) &= \frac{1}{N} \sum_{n=1}^N \big(\lambda_n(\beta) - \lambda_n(0)\big) \\&\leq \frac{1}{N} \sum_{n=1}^N \big(\lambda_n(\beta) - \lambda_n(\gamma)\big) \quad \text{for all $\gamma \in (0,\infty)$ and all $N \in \mathbb{N}$}\ .
    \end{align*}
    Due to Theorem~\ref{MainResult1}, whenever $\beta^{\prime} \neq 0$, one has
    \begin{align}\label{eq:limsup}
        \liminf_{N \rightarrow \infty} \frac{1}{N} \sum_{n=1}^N d_n(0,\beta') \geq \frac{4\vert \mE \vert}{\mathcal{L}} \bigg( \frac{1}{\gamma} - \frac{1}{\beta'} \bigg) \quad \text{for every $\gamma \in (0,\infty)$} \ ,
    \end{align}
    and, whenever $\beta \neq 0$, 
    \begin{align}\label{eq:liminf}
        \limsup_{N \rightarrow \infty} \frac{1}{N} \sum_{n=1}^N d_n(\beta,0) \leq \frac{4\vert \mE \vert}{\mathcal{L}} \bigg( \frac{1}{\beta} - \frac{1}{\gamma} \bigg) \quad \text{for every $\gamma \in (0,\infty)$} \ .
    \end{align}
   The statement then follows by considering the limit $\gamma \rightarrow 0^+$. Also, the same argument can now be repeated to obtain, for any $\gamma \in (0,\infty)$ and whenever $\beta^{\prime} \neq 0$,
    \begin{align*}\label{eq:limsup2}
        \liminf_{N \rightarrow \infty} \frac{1}{N} \sum_{n=1}^N d_n^q(0,\beta') \geq \frac{4\vert \mE \vert}{\mathcal{L}} \bigg( \frac{1}{\gamma} - \frac{1}{\beta'} \bigg) + \frac{1}{\mathcal{L}} \int_\Graph q \ \mathrm{d}x \ ,
    \end{align*}
    and, for any $\gamma \in (0,\infty)$ and whenever $\beta \neq 0$,
    \begin{equation}\label{eq:liminf2}
        \limsup_{N \rightarrow \infty} \frac{1}{N} \sum_{n=1}^N d_n(\beta,0) \leq \frac{4\vert \mE \vert}{\mathcal{L}} \bigg( \frac{1}{\beta'} - \frac{1}{\gamma} \bigg) + \frac{1}{\mathcal{L}} \int_\Graph q \ \mathrm{d}x \ .
    \end{equation}
   The statement then again follows by considering the limit $\gamma \rightarrow 0^+$.
\begin{example}[Spectral comparison on intervals]
    Assume that $\mathcal{G} \simeq [0,\mathcal{L}]$ is an interval of length $\mathcal{L} > 0$. Then, according to Theorem \ref{MainResultII}, we have that
\begin{equation*}
    \lim_{N \rightarrow \infty}\frac{1}{N}\sum_{n=1}^{N}d_n(0,\beta)=\infty\ .
\end{equation*}
Note that the case $\beta = 0$ on an interval simply corresponds to Dirichlet boundary conditions at the interval ends. In addition, $\delta^{\prime}$-conditions with coupling strength $\beta > 0$ correspond to $\delta$-coupling conditions with strength $\frac{1}{\beta} > 0$ (compare with Section~\ref{SomeAdditionalSpectral}). Consequently, we conclude that Dirichlet boundary conditions -- at least on an interval-- are not asymptotically close to neither $\delta^{\prime}$- nor $\delta$-coupling conditions.
\end{example}

\section{Some additional spectral comparison result}\label{SomeAdditionalSpectral}

In this section, we want to comment on some additional spectral comparison results. More explicitly, one may ask what happens if one compares the eigenvalues of a Schrödinger operator subject to $\delta$-coupling conditions to the ones of a Schrödinger operator subject to $\delta^{\prime}$-coupling conditions. 

In order to (partly) answer this question, we first introduce $\delta$-coupling conditions in more detail: Assume that $\Graph$ is a graph with the same properties as assumed above. Let then $\sigma = (\sigma_\mv)_{\mv \in \mV} \in \mathbb{R}^{\vert \mV \vert}$ be given and denote (with a slight abuse of notation) by $H_{\sigma}^q$ the unique self-adjoint operator associated with the quadratic form
\begin{equation}\label{FormDelta}
    h_\sigma^q[f] := \int_\Graph \big( \vert f' \vert^2 + q\vert f \vert^2 \big)\ \mathrm{d}x + \sum_{\mv \in \mV} \sigma_\mv \vert f(\mv) \vert^2
\end{equation}
with form domain
\begin{equation}\label{FormDeltaDomain}
\mathcal{D}[h_\sigma^q] := H^1(\Graph) := C(\Graph) \cap \widetilde{H}^1(\Graph)\ ,
\end{equation}
where $C(\Graph)$ denotes the space of \emph{continuous} functions on $\Graph$, cf.\ also \cite[Equation~(8)]{BK23}. As an operator, $H_{\sigma}^q$ acts as in~\eqref{ActionOperator} with operator domain
 \begin{equation*}\begin{split}
\mathcal{D}_{\sigma}:=\bigg\{f\in \widetilde{H}^2(\Graph): f= (f_\me)_{\me \in \mE} \text{ is continuous at all }& \text{vertices, and}   \sum_{\me \sim \mv} f^{\prime}_\me(\mv)=\sigma_\mv f(\mv)   \bigg\}  \ ,
    \end{split}
\end{equation*}
 where $f^{\prime}_\me(\mv)$ refers to the \emph{inward} derivative at the vertex $\mv \in \mV$. In our setting, $H^{q}_{\sigma}$ has purely discrete spectrum as well and we shall denote is eigenvalues by $(\mu_n^q(\sigma))_{n \in \mathbb{N}}$. We then set 
\begin{align*}
    d_n^{\delta,\delta'}(\sigma,\beta) := \mu_n^q(\sigma) - \lambda_n^q(\beta), \quad n \in \mathbb{N} \ ,
\end{align*}
where $(\lambda_n^q(\sigma))_{n \in \mathbb{N}}$ are the eigenvalues of a Schrödinger operator $H^{q}_{\beta}$ on the same graph $\Graph$ but subject to $\delta^{\prime}$-coupling conditions. 

 Before we state our results, recall that a graph $\Graph$ is called \emph{bipartite} if one can write $\mV = \mV_1 \cup \mV_2$ for disjoint nonempty subsets $\mV_1, \mV_2 \subset \mV$ such that each edge has an endpoint corresponding to a vertex in $\mV_1$ and one endpoint corresponding to a vertex in $\mV_2$. In addition, the \emph{Betti number} of a graph is defined by $\beta(\Graph) := \vert \mE \vert - \vert \mV \vert + 1$ equal to $1$ (cf.\ also Figure \ref{fig:bipartite}).
\begin{figure}[h]
\begin{tikzpicture}[scale=0.55]
      \tikzset{enclosed/.style={draw, circle, inner sep=0pt, minimum size=.1cm, fill=gray}, every loop/.style={}, every fit/.style={ellipse,draw,inner sep=-2.5pt,text width=2cm, line width=1pt}}

      \node[enclosed] (A) at (0,0) {};
      \node[enclosed] (B) at (0,2) {};
      \node[enclosed] (C) at (0,4) {};
      \node[enclosed] (D) at (0,6) {};
      \node[enclosed] (B') at (8,1) {};
      \node[enclosed] (C') at (8,3) {};
      \node[enclosed] (D') at (8,5) {};
      
      \node [gray,fit=(A) (D),label=left:\textcolor{black}{$\mV_1 \quad$}] {};
      \node [gray,fit=(B') (D'),label=right:\textcolor{black}{$\quad \mV_2$}] {};
      \draw (A) edge node[above] {} (B') node[midway, above] (edge2) {};
      \draw (A) edge node[above] {} (C') node[midway, above] (edge3) {};
      \draw (B) edge node[above] {} (B') node[midway, above] (edge2) {};
      \draw (B) edge node[above] {} (D') node[midway, above] (edge4) {};
      \draw (C) edge node[above] {} (B') node[midway, above] (edge2) {};
      \draw (C) edge node[above] {} (C') node[midway, above] (edge3) {};
      \draw (D) edge node[above] {} (C') node[midway, above] (edge3) {};
     \end{tikzpicture}
     \caption{A bipartite graph with vertex set $\mV = \mV_1 \cup \mV_1$ and $\beta(\Graph) = 1$.}\label{fig:bipartite}
     \end{figure}
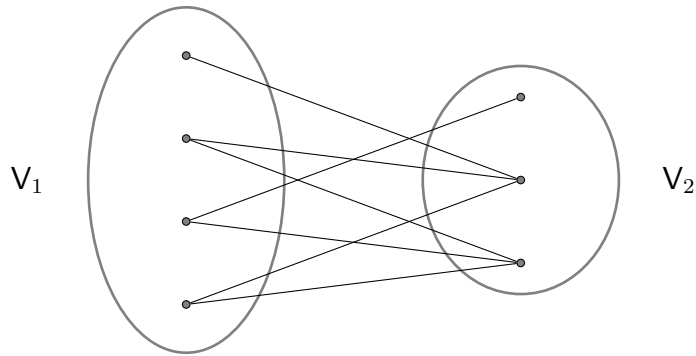

Now, in \cite[Corollary~3.7]{KurRoh} it was shown that $H^{q=0}_{\beta=0}$ and $H^{q=0}_{\sigma=0}$ are isospectral if and only if $\Graph$ is bipartite with $\beta(\Graph)=1$. Using this, Theorem \ref{MainResultII} implies the following spectral comparison result.
\begin{corollary}[Spectral comparison result for bipartite graphs]\label{cor:mixed-spectral-comparison} Let $\Graph$ be a finite compact, connected and bipartite quantum graph with $\beta(\Graph) = 1$. Furthermore, let $\beta \in (0,\infty)^{\vert \mV \vert}$ and $\sigma \in [0,\infty)^{\vert \mV \vert}$ be given with associated Schrödinger operators $H_\beta^q$ and $H_\sigma^q$. Then
\begin{equation}\label{RelationInfiniteMixed}
	\lim_{N \rightarrow \infty}\frac{1}{N}\sum_{n=1}^{N} d_n^{\delta,\delta'}(\sigma,\beta) = +\infty.
	\end{equation}
\end{corollary}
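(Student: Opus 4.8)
The plan is to reduce the claim to the divergence already established for $\delta'$-couplings in Theorem~\ref{MainResultII}, by successively stripping away the $\delta$-coupling strength $\sigma$, the difference between the \emph{standard} (Kirchhoff) and \emph{anti-Kirchhoff} realisations, and finally the potential $q$. First I would exploit monotonicity in $\sigma$. From the form~\eqref{FormDelta}, for $\sigma\in[0,\infty)^{\vert\mV\vert}$ one has $h_\sigma^q[f]=\int_\Graph(\vert f'\vert^2+q\vert f\vert^2)\,\mathrm{d}x+\sum_{\mv\in\mV}\sigma_\mv\vert f(\mv)\vert^2\ge\int_\Graph(\vert f'\vert^2+q\vert f\vert^2)\,\mathrm{d}x$ on the common form domain $H^1(\Graph)$, the latter being the form of the standard (Kirchhoff) realisation whose eigenvalues are $\mu_n^q(0)$. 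The min--max principle~\eqref{eq:min-max} then yields $\mu_n^q(\sigma)\ge\mu_n^q(0)$ for all $n$, so
\[
\frac1N\sum_{n=1}^N d_n^{\delta,\delta'}(\sigma,\beta)\;\ge\;\frac1N\sum_{n=1}^N\big(\mu_n^q(0)-\lambda_n^q(\beta)\big),
\]
and it suffices to show that the right-hand side tends to $+\infty$.

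Next I would insert the anti-Kirchhoff spectrum and split
\[
\mu_n^q(0)-\lambda_n^q(\beta)=\big(\mu_n^q(0)-\lambda_n^q(0)\big)+\big(\lambda_n^q(0)-\lambda_n^q(\beta)\big),
\]
where $\lambda_n^q(0)$ are the eigenvalues of the anti-Kirchhoff realisation. The second bracket is exactly $d_n(0,\beta)$. Bounding it from below by its value at the constant coupling $\beta_{\min}:=\min_{\mv\in\mV}\beta_\mv>0$ (by min--max monotonicity in $\beta$, since $\beta_\mv\ge\beta_{\min}$ gives $1/\beta_\mv\le1/\beta_{\min}$, hence $\lambda_n^q(\beta)\le\lambda_n^q(\beta_{\min})$ and therefore $d_n(0,\beta)\ge d_n(0,\beta_{\min})$), Theorem~\ref{MainResultII} applies to the constant coupling $\beta_{\min}$ and gives $\frac1N\sum_{n=1}^N d_n(0,\beta)\to+\infty$.

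It then remains to show that the first bracket, comparing standard and anti-Kirchhoff conditions on the same graph and potential, has a finite Cesàro average. Here I would invoke the isospectrality \cite[Corollary~3.7]{KurRoh}: since $\Graph$ is bipartite with $\beta(\Graph)=1$, the potential-free operators satisfy $\mu_n^{q=0}(0)=\lambda_n^{q=0}(0)$ for every $n$. Writing
\[
\mu_n^q(0)-\lambda_n^q(0)=\big(\mu_n^q(0)-\mu_n^{q=0}(0)\big)-\big(\lambda_n^q(0)-\lambda_n^{q=0}(0)\big)+\big(\mu_n^{q=0}(0)-\lambda_n^{q=0}(0)\big),
\]
the last term vanishes, and the two potential corrections are treated by a Hadamard-type formula in the potential variable together with an interior local Weyl law (using the uniform $L^\infty$-bounds on eigenfunctions and dominated convergence, as in the proof of Theorem~\ref{MainResult1}): for the Kirchhoff side this is the content of \cite{BK23}, and for the anti-Kirchhoff side it is the second (interior) statement of Proposition~\ref{prop:localweyllaw}, which extends to $\beta=0$ because its dummy-vertex proof uses only that the on-diagonal heat kernel at an interior point behaves like $(4\pi t)^{-1/2}$ as $t\to0^+$, irrespective of the vertex conditions. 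Both corrections therefore have Cesàro average $\frac1{\mathcal L}\int_\Graph q\,\mathrm{d}x$ and cancel, so the first bracket averages to $0$. Combined with the divergence of the second bracket, this proves~\eqref{RelationInfiniteMixed}.

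The delicate point I expect to be the main obstacle is this last step: the standard and anti-Kirchhoff form domains genuinely differ (continuity of the function versus vanishing of the boundary-value sum), so no direct min--max monotonicity links the two operators, and the bipartite/$\beta(\Graph)=1$ isospectrality of \cite{KurRoh} is precisely what is needed to bridge them at $q=0$. The only genuinely new verification is that the interior local Weyl law of Proposition~\ref{prop:localweyllaw} persists at the otherwise-excluded value $\beta=0$, which is what makes the potential contributions on both sides coincide and cancel.
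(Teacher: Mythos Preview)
Your argument is correct and follows essentially the same skeleton as the paper's proof: reduce $\sigma$ to $0$ by min--max, pass from Kirchhoff to anti-Kirchhoff via the isospectrality result \cite[Corollary~3.7]{KurRoh} for bipartite graphs with Betti number~$1$, reduce $\beta$ to a constant $\beta_{\min}$, and invoke Theorem~\ref{MainResultII}.

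The one substantive difference is in how you handle the potential $q$. The paper strips it at the outset by the crude min--max bounds $\mu_n^q(\sigma)\ge\mu_n^{q=0}(\sigma)-\|q\|_\infty$ and $\lambda_n^q(\beta)\le\lambda_n^{q=0}(\beta)+\|q\|_\infty$, which produces a harmless additive $-2\|q\|_\infty$ and immediately reduces to the potential-free case where isospectrality applies directly. You instead keep $q$ until the end and then compute the Ces\`aro averages of $\mu_n^q(0)-\mu_n^{q=0}(0)$ and $\lambda_n^q(0)-\lambda_n^{q=0}(0)$ precisely, showing they both equal $\frac{1}{\mathcal L}\int_\Graph q\,\mathrm{d}x$ and hence cancel. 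This is more information than needed and forces you to extend the interior local Weyl law of Proposition~\ref{prop:localweyllaw} to the anti-Kirchhoff case $\beta=0$ --- exactly the point you flag as the main obstacle. That extension is indeed plausible for the reasons you give, but it is entirely unnecessary here: the same crude $\|q\|_\infty$ bound applied to $\mu_n^q(0)-\lambda_n^q(0)$ (via the triangle inequality and isospectrality at $q=0$) yields $|\mu_n^q(0)-\lambda_n^q(0)|\le 2\|q\|_\infty$ directly, and the proof then closes without any new verification.
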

\begin{proof}
   First, due to the min-max principle, one has $\mu_n^q(\sigma) \geq \mu_n^{q = 0}(\sigma) - \Vert q \Vert_\infty$ and likewise $\mu_n^q(\beta) \leq \mu_n^{q = 0}(\beta) + \Vert q \Vert_\infty$ for every $n \in \mathbb{N}$. With this we conclude
   \begin{align*}
       \frac{1}{N} \sum_{n=1}^N d_n^{\delta,\delta'}(\sigma,\beta) &\geq \frac{1}{N} \sum_{n=1}^N \big(\mu_n^{q=0}(0) - \lambda_n^{q=0}(\beta) \big) - 2\Vert q \Vert_\infty \\&= \frac{1}{N} \sum_{n=1}^N \big(\lambda_n^{q=0}(0) - \lambda_n^{q=0}(\beta) \big) - 2\Vert q \Vert_\infty \\&\geq \frac{1}{N} \sum_{n=1}^N \big(\lambda_n^{q=0}(0) - \lambda_n^{q=0}(\beta') \big) - 2\Vert q \Vert_\infty, \quad \text{for every $N \in \mathbb{N}$} \ ,
   \end{align*}
   where $\beta' := \big(\min_{\mv \in \mV} \beta_\mv \big) \cdot \mathbf{1} \in (0,\infty)^{\vert \mV \vert}$ (and $\mathbf{1}$ is the vector consisting of ones only), and since $\mu_n^{q=0}(0) = \lambda_n^{q=0}(0)$ for all $n \in \mathbb{N}$ due to \cite[Corollary~3.7]{KurRoh}. Theorem \ref{MainResultII} then implies
   \begin{align*}
       \liminf_{N \rightarrow \infty} \frac{1}{N} \sum_{n=1}^N d_n^{\delta,\delta'}(\sigma,\beta) \geq \lim_{N \rightarrow \infty } \frac{1}{N}\sum_{n=1}^N \big(\lambda_n^{q=0}(0) - \lambda_n^{q=0}(\beta') \big) - 2\Vert q \Vert_\infty = +\infty \ ,
   \end{align*}
   which completes the proof.
\end{proof}

\appendix 

\section{Small-time asymptotics of the heat-kernel for bounded potentials}\label{sec:general-potential-delta}
In \cite{BER}, when studying properties of the heat-kernel on graphs, the authors restrict attention to bounded potentials in $L^\infty(\Graph)$ that are also smooth on any edge (in other words, they assume the same regularity as we do in this paper). Since the results of~\cite{BER} were employed in \cite{BK23, BKAmba, BKInfinite}, the same assumption on the potential had been imposed. However, in this appendix we want to show, using methods from the theory of strongly continuous semigroups, that for the small-time asymptotics it is enough to only assume boundedness of the potential, at least as long as one focusses on $\delta$-coupling conditions. Unfortunately, however, the methods presented here do not allow to also cover $\delta'$-coupling conditions since our proof heavily relies on the positivity of the underlying generated strongly continuous semigroup.

 Regarding the associated heat-kernel of a Schrödinger operator $H^{q}_{\sigma}$ (see Section~\ref{SomeAdditionalSpectral}), one again has $p^{H_{\sigma}^q}(t;\cdot,\cdot) \in L^\infty(\Graph \times \Graph)$, and more precisely, $p^{H_{\sigma}^q}(t;\cdot,\cdot)  \in C^{0,1}(\Graph \times \Graph)$, where $C^{0,1}(\Graph \times \Graph)$ denotes the space of \emph{jointly Lipschitz continuous} functions on $\Graph \times \Graph$, cf.\ \cite{KostenkoMugnoloNicolussi, BifulcoMugnolo}.

For bounded potentials, we then establish the following result which forms a generalization of \cite[Proposition~8.1]{BER} for $\delta$-conditions.
\begin{theorem}\label{thm:general-potentials}
    Assume that $\Graph$ is a graph as above and let $q \in L^\infty(\Graph)$ and $\sigma \in \mathbb{R}^{\vert \mV \vert}$ be given. Then, for $x \in \Graph$ (understanding that $\deg(x)=2$ whenever $x$ in an interior point), 
    \begin{align}\label{eq:short-time-asymptotic-heat-kernel-general-potentials}
        p_{\me,\me'}^{H_\sigma^q}(t;x,x) \sim \frac{\delta_{\me,\me'} + T_\mv(\infty)_{\me,\me'}}{\sqrt{4\pi t}} = \frac{2}{\deg(x)} \frac{1}{\sqrt{4\pi t}}\ , \quad \text{as $t \rightarrow 0^+$}\ ,
    \end{align}
    where $T_\mv(\infty):=\mathrm{Id}-2\mathcal{P}_{\mv,\delta}$ with 
     \begin{align*}
    \mathcal{P}_{\mv,\delta}= \begin{pmatrix}
	1-\frac{1}{\deg(\mv)} & -\frac{1}{\deg(\mv)} & \dots & -\frac{1}{\deg(\mv)} \\ -\frac{1}{\deg(\mv)} & \ddots & \ddots &  \vdots \\ \vdots & \ddots & \ddots & -\frac{1}{\deg(\mv)} \\ -\frac{1}{\deg(\mv)} & \dots & -\frac{1}{\deg(\mv)} & 1-\frac{1}{\deg(\mv)}
	\end{pmatrix} \in \mathbb{C}^{\deg(\mv) \times \deg(\mv)} \ . 
	\end{align*}
\end{theorem}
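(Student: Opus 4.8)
The plan is to reduce the general bounded potential to the already-settled smooth case of \cite[Proposition~8.1]{BER} by a bracketing (domination) argument for the heat semigroups, the key structural input being that the $\delta$-form generates a \emph{positivity-preserving} semigroup. Since $q \in L^\infty(\Graph)$, there are finite constants $m \le M$ with $m \le q \le M$ almost everywhere on $\Graph$ (the essential infimum and supremum of $q$). The idea is to sandwich $p^{H_\sigma^q}$ between the heat kernels of the two \emph{constant} potentials $m$ and $M$, for which $H_\sigma^{c} = H_\sigma^{0} + c$ and hence $p^{H_\sigma^{c}}(t;x,y) = \mathrm{e}^{-tc}\,p^{H_\sigma^{0}}(t;x,y)$, and then to let $t \to 0^+$.

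First I would verify that $(\mathrm{e}^{-tH_\sigma^q})_{t\ge 0}$ is positivity-preserving via the first Beurling--Deny criterion for the form $h_\sigma^q$. For $f \in \mathcal{D}[h_\sigma^q] = C(\Graph)\cap\widetilde{H}^1(\Graph)$ one has $|f| \in C(\Graph)\cap\widetilde{H}^1(\Graph)$ with $\big\vert |f|' \big\vert = |f'|$ almost everywhere, and $|f(\mv)| = \big\vert\,|f|(\mv)\,\big\vert$ at each vertex, so that $h_\sigma^q[|f|] = h_\sigma^q[f]$. This is exactly the step that breaks down for $\delta'$-conditions, where the vertex contribution $\big\vert \sum_{\me\sim\mv} f_\me(\mv)\big\vert^2$ is not invariant under $f \mapsto |f|$, which explains the restriction to $\delta$-couplings. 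Together with the Lipschitz regularity $p^{H_\sigma^q}(t;\cdot,\cdot)\in C^{0,1}(\Graph\times\Graph)$ recalled above, positivity guarantees that every entry $p_{\me,\me'}^{H_\sigma^q}(t;x,y)$ is nonnegative and pointwise well defined.

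Next I would establish monotonicity of the kernel in the potential: if $q_1 \le q_2$ almost everywhere, then $p_{\me,\me'}^{H_\sigma^{q_2}}(t;x,y) \le p_{\me,\me'}^{H_\sigma^{q_1}}(t;x,y)$ entrywise. The cleanest route is Duhamel's formula
\[
\mathrm{e}^{-tH_\sigma^{q_1}} - \mathrm{e}^{-tH_\sigma^{q_2}} = \int_0^t \mathrm{e}^{-(t-s)H_\sigma^{q_1}}\,(q_2-q_1)\,\mathrm{e}^{-sH_\sigma^{q_2}}\ \mathrm{d}s \ ,
\]
whose integrand is the composition of two positivity-preserving operators with multiplication by the nonnegative function $q_2 - q_1$, and therefore has a nonnegative integral kernel. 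Applying this with $q_1 \equiv m \le q$ and then with $q \le M \equiv q_2$, and inserting $p^{H_\sigma^{c}} = \mathrm{e}^{-tc}p^{H_\sigma^{0}}$, yields the sandwich
\[
\mathrm{e}^{-tM}\,p_{\me,\me'}^{H_\sigma^{0}}(t;x,x) \ \le\ p_{\me,\me'}^{H_\sigma^{q}}(t;x,x) \ \le\ \mathrm{e}^{-tm}\,p_{\me,\me'}^{H_\sigma^{0}}(t;x,x)
\]
valid for every $t > 0$ and all $\me,\me' \sim x$.

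Finally I would conclude by a squeeze. Since $q \equiv 0$ is smooth, \cite[Proposition~8.1]{BER} applies to the free operator and gives $p_{\me,\me'}^{H_\sigma^{0}}(t;x,x) \sim \big(\delta_{\me,\me'}+T_\mv(\infty)_{\me,\me'}\big)/\sqrt{4\pi t} = \tfrac{2}{\deg(x)}\tfrac{1}{\sqrt{4\pi t}}$ as $t \to 0^+$. As $\mathrm{e}^{-tm} \to 1$ and $\mathrm{e}^{-tM} \to 1$, dividing the sandwich by $p_{\me,\me'}^{H_\sigma^{0}}(t;x,x)$ (which is positive for small $t$) shows $p_{\me,\me'}^{H_\sigma^{q}}/p_{\me,\me'}^{H_\sigma^{0}} \to 1$, whence \eqref{eq:short-time-asymptotic-heat-kernel-general-potentials} follows. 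I expect the main obstacle to be making the domination rigorous at the level of integral kernels rather than merely of operators: one must ensure the Duhamel identity is justified (e.g.\ on a suitable core) and that positivity-preservation transfers to entrywise nonnegativity of the edgewise kernel; the constant prefactors $\mathrm{e}^{-tm},\mathrm{e}^{-tM}$ are then harmless in the limit.
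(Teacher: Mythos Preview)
Your proposal is correct and follows the same overall strategy as the paper: bracket $q$ by constant potentials, use positivity of the $\delta$-semigroup to obtain a pointwise sandwich of heat kernels, and squeeze using the known smooth-potential asymptotics from \cite{BER}. The only substantive difference is in how the domination step is justified. The paper invokes the abstract semigroup-domination criterion \cite[Theorem~2.24]{Ouh05}: from the form ordering $h_\sigma^{q_-}\le h_\sigma^{q}\le h_\sigma^{q_+}$ together with positivity of all three semigroups (via Beurling--Deny--Ouhabaz), one deduces $p^{H_\sigma^{q_+}}\le p^{H_\sigma^{q}}\le p^{H_\sigma^{q_-}}$ directly. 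You instead prove the same kernel monotonicity by Duhamel's formula, which is more hands-on and avoids citing the abstract domination theorem; your explicit identification $p^{H_\sigma^{c}}=\mathrm{e}^{-tc}p^{H_\sigma^{0}}$ for constant $c$ also makes the squeeze particularly transparent, whereas the paper simply appeals to \cite{BER} for the two smooth constant potentials separately. Both routes are valid; yours is slightly more elementary, the paper's is shorter by outsourcing the work to \cite{Ouh05}.
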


The basic proof strategy follows~\cite[Lemma~11]{BKInfinite}.

\begin{proof}
   Define
    \[
    q_- := -\Vert q \Vert_{L^\infty(\Graph)} \cdot \mathbf{1} \quad \text{and} \quad q_+ := \Vert q \Vert_{L^\infty(\Graph)} \cdot \mathbf{1}\ ,
    \]
    where $\mathbf{1}(x)=1$ is the constant function. Then $q_-,q_+ \in L^\infty(\Graph) \cap \bigoplus_{\me \in \mE} C^\infty(0,\ell_\me)$ with
    \[
    \cD[h_\sigma^q] = \cD[h_\sigma^{q_-}] = \cD[h_\sigma^{q_+}] = H^1(\Graph)
    \]
    as well as, for the associated sesquilinear forms, $h_\sigma^{q_-}[f,g] \leq h_\sigma^q[f,g] \leq h_\sigma^{q_+}[f,g]$ for all $0 \leq f,g \in H^1(\Graph)$. Since all underlying semigroups $(\mathrm{e}^{-tH_{\sigma}^q})_{t \geq 0}$, $(\mathrm{e}^{-tH_{\sigma}^{q_-}})_{t \geq 0}$ and $(\mathrm{e}^{-tH_{\sigma}^{q_+}})_{t \geq 0}$ are positive due to the Beurling--Deny--Ouhabaz criterion for positive $C_0$-semigroups (see, for instance, \cite[Proposition~5.5]{ArendtGlueck20}), one concludes by \cite[Theorem~2.24]{Ouh05} that
    \begin{align*}
        \big\vert \mathrm{e}^{-tH_{\sigma}^{q_+}} f \big\vert \leq \mathrm{e}^{-tH_{\sigma}^q} \vert f \vert \quad \text{and} \quad \big\vert \mathrm{e}^{-tH_{\sigma}^q} f \big\vert \leq \mathrm{e}^{-tH_{\sigma}^{q_-}} \vert f \vert \quad \text{for all $f \in L^2(\Graph)$}
    \end{align*}
    implying that
    \begin{align}\label{eq:bracketing}
        p_{\me,\me'}^{H_\sigma^{q+}}(t;x,y) \leq p_{\me,\me'}^{H_\sigma^{q}}(t;x,y)  \leq p_{\me,\me'}^{H_\sigma^{q-}}(t;x,y) \quad \text{for all $t > 0$ and all $x,y \in \Graph$} \ .
    \end{align}
    Now, as both $q_-$ and $q_+$ are smooth on each edge, the asymptotics of \eqref{eq:short-time-asymptotic-heat-kernel-general-potentials} hold according to \cite[Proposition~8.1]{BER}. The statement then follows from~\eqref{eq:bracketing}.
\end{proof}
Using the asymptotics of~\eqref{eq:short-time-asymptotic-heat-kernel-general-potentials} one can deduce, for example, a more general version of \cite[Theorem~1]{BK23}. The same is true for corresponding results established in \cite{BKAmba,BKInfinite}.

\subsection*{Acknowledgement}{PB was supported by the Deutsche Forschungsgemeinschaft DFG (Grant 397230547). JK appreciates interesting discussions with G.~Berkolaiko (Texas) as well as M.~Hofmann (Hagen).}
	\vspace*{0.5cm}

\end{document}